\documentclass[11pt]{amsart}
\usepackage{amssymb, amsthm, amsmath, gensymb, dsfont}
\usepackage{graphicx, comment}
\usepackage[small]{caption}
\usepackage{subcaption}
\usepackage{epsfig}
\usepackage{tikz, pgfplots, float}
\usepackage{setspace}
\usepackage{amsfonts}

\usepackage[utf8]{inputenc}
\usepackage{fullpage}
\usepackage{framed}

\usepackage{enumerate}
\usepackage{url}
\usepackage[breaklinks]{hyperref}
\usepackage{cleveref}
\hypersetup{
	colorlinks = true, 
	urlcolor = cyan, 
	linkcolor = teal, 
	citecolor = cyan 
}

\newtheorem{theorem}{Theorem}

\crefname{claim}{claim}{claims}

\title{Remarks on a theorem of Erd\H{o}s and Szemer\'{e}di}

\author{Dingyuan Liu}
\address{Dingyuan Liu \newline Karlsruhe Institute of Technology, Englerstraße 2, D-76131 Karlsruhe, Germany}
\email{liu@mathe.berlin}

\begin{document}
\maketitle

\begin{abstract}
Given a graph $G$ and a real $\varepsilon>0$, an edge-coloring of $G$ is called \textit{$\varepsilon$-balanced} if each color appears on at least an $\varepsilon$-fraction of the edges in $G$. A classical result of Erd\H{o}s and Szemer\'{e}di asserts that if a $2$-edge-coloring of a complete graph $K_n$ is not $\varepsilon$-balanced for some $0<\varepsilon\leq1/2$, then there exists a large monochromatic clique. This theorem has been used extensively in Ramsey-type arguments, as it allows one to focus on reasonably balanced colorings. However, in its original formulation the dependence between $n$ and $\varepsilon$ was left implicit, occasionally leading to inaccurate applications. In this short note, we revisit the Erd\H{o}s--Szemer\'{e}di theorem and specify all parameter dependencies.
\end{abstract}

\section{Introduction}
A central theme in Ramsey theory is the emergence of large homogeneous structures in edge-colored complete graphs. One classical manifestation of this phenomenon is a result of Erd\H{o}s and Szemer\'{e}di~\cite{erdos1972ramsey}, which asserts that if the edges of a complete graph are colored in an unbalanced way, then one shall necessarily find a large monochromatic clique. This theorem has become a standard tool in a variety of Ramsey-type arguments, where it is often used to justify a reduction to the case of relatively balanced colorings.

Formally, given a graph $G$ and a real $\varepsilon>0$, an edge-coloring of $G$ is called \textit{$\varepsilon$-balanced} if each color appears on at least an $\varepsilon$-fraction of the edges in $G$. The Erd\H{o}s--Szemer\'{e}di theorem states that if a $2$-edge-coloring of an $n$-vertex complete graph $K_n$ is not $\varepsilon$-balanced for some $0<\varepsilon\leq1/2$, then there exists a monochromatic clique of size at least $\frac{C\log{n}}{\varepsilon\log(1/\varepsilon)}$, where $C>0$ is an absolute constant. In its original statement, it was assumed implicitly that the parameter $\varepsilon$ is fixed and the number of vertices $n$ is sufficiently large. This level of precision is sufficient for asymptotic applications, yet later work has at times invoked the theorem for arbitrary values of $n$ and $\varepsilon$.

This note aims to give a precise version of the Erd\H{o}s--Szemer\'{e}di theorem, where all parameter dependencies are explicit. Observe that a $2$-edge-coloring of $K_n$ is not $\varepsilon$-balanced if and only if the subgraph $G\subseteq K_n$ consisting of the edges of the majority color has more than $(1-\varepsilon)\binom{n}{2}$ edges. By letting $\varepsilon=1/k$, the theorem can be stated in the following equivalent form. All logarithms are taken to base $2$.

\begin{theorem}[{\cite[Theorem~2]{erdos1972ramsey}}]
\label{main}
Let $0<C\leq0.01$ be a constant. For every integer $n\geq3$ and every real $k\geq2$ with $k\leq \frac{n}{3C}$, if an $n$-vertex graph $G$ has at least $(1-1/k)\binom{n}{2}$ edges, then $G$ contains a clique or an independent set of size at least $\frac{Ck\log{n}}{\log{k}}$.
\end{theorem}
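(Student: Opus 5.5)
We run a greedy clique-building argument driven by minimum degree, and use Ramsey's bound in the final stage; this is essentially the Erd\H{o}s--Szemer\'{e}di argument with constants tracked. Write $\bar G$ for the complement. By hypothesis $e(\bar G)\le \binom{n}{2}/k$, so the average degree of $\bar G$ is at most $(n-1)/k$. Set $t=\frac{Ck\log n}{\log k}$. If $t\le 2$ there is nothing to prove, since any edge or non-edge gives a clique or independent set of size $2$ (as $n\ge3$). So assume $t>2$. The proof splits according to the size of $k$ relative to $n$.

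\emph{Case 1: $k$ is small, say $k\le n^{1/2}$.} Pass to an induced subgraph with controlled complement degree. Delete vertices of $\bar G$-degree more than $2n/k$. Since $\bar G$ has average degree below $n/k$, at most half the vertices are deleted. This leaves a set $U$ with $|U|\ge n/2$ in which every vertex has at most $2n/k$ non-neighbours in $G$. Now build a clique greedily inside $U$: pick any vertex, restrict to its $G$-neighbourhood, and repeat. Each step keeps a $(1-2n/(k|U_i|))$ fraction of the current set. This fails once the current set becomes small, because non-degrees are measured relative to $n$, not $|U_i|$. To fix this, after each step re-apply the averaging/deletion argument inside the current set $W$. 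Its complement still has at most $\binom n2/k$ edges, so the average non-degree in $W$ is at most $n^2/(k|W|)$. While $|W|\ge n/\sqrt{k}$, deletions and the greedy step cost a factor of roughly $(1-O(\sqrt{k}/k))$. This gives about $\sqrt{k}\log k/ O(1)$ steps, which is not enough. So instead I would follow the original proof and use the Ramsey bound $R(s,t)\le\binom{s+t-2}{s-1}$ within a dyadic scheme. Partition the process by the density of $\bar G[W]$: as long as $\bar G[W]$ has density at most $2^j/k$, greedy picking of minimum-non-degree vertices yields about $\frac{k}{2^{j}}\log 2$ clique vertices per halving of $|W|$. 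The density can double at most $\log k$ times before reaching a constant. Summing over the stages and choosing the stage where the halving count is largest, one stage of length about $\log n/\log k$ halvings gives a clique of size about $\frac{k}{2^j}\cdot\frac{\log n}{\log k}$ in the first stage. The remaining vertices, at least $n^{1-o(1)}$, are handled by Ramsey's theorem, which gives a homogeneous set of size about $\frac12\log|W|$.

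\emph{Case 2: $k\ge n^{1/2}$} (up to $n/(3C)$). Here $\log k\asymp\log n$, so $t\asymp Ck$ and we need a clique of size linear in $k$. By Tur\'an's theorem applied to $\bar G$, its independence number, which is the clique number of $G$, is at least $n/(1+2e(\bar G)/n)\ge n/(1+(n-1)/k)\ge k/2$ when $k\le n$. When $k\in(n,n/(3C)]$ the graph is nearly complete, and Tur\'an gives at least $n/2\ge 3Ck/2\ge t$, since $\log n/\log k\le 1$ whenever $k\ge n$. For $k\le n$ we get $t\le Ck\cdot\frac{\log n}{\frac12\log n}=2Ck\le k/2$. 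In fact Tur\'an alone handles every $k$ with $\log k\ge 4C\log n$, which is most of the range given $C\le0.01$.

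\emph{The main obstacle} is the intermediate range $k\le n^{4C}$, where Tur\'an gives only $k/2$ but we need $Ck\log n/\log k$, which is larger. Here I would iterate Tur\'an/greedy with the Ramsey fallback. Repeatedly take a vertex $v$ of $G$-degree at least $(1-2/k)|W|$ inside the current set $W$; such a vertex exists by averaging, provided $\bar G[W]$ still has density about $1/k$. Then move to its neighbourhood. If the density inside $W$ doubles, switch the parameter from $k$ to $k/2$. Tracking $\sum_j (\text{steps at level } j)$ with per-step loss $2^{j+1}/k$ and requiring $|W|\ge n^{1/2}$ throughout gives the bound $\min_j \frac{k}{2^{j+1}}\cdot\frac{\log n}{2(j+1)}$-type sums, whose minimum is about $k\log n/\log k$ times a constant. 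Checking that this constant exceeds $0.01$ under $k\ge2$, $n\ge3$ is the delicate bookkeeping step. Small $n$ (where $t\le 2$) is dispatched trivially.
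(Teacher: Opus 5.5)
\textbf{There is a genuine gap, in the main range of the theorem.} Your handling of large $k$ is fine. Applying Tur\'an (Caro--Wei) to $\bar G$ gives $\omega(G)\ge nk/(n+k-1)$, which settles $k\ge\sqrt n$ exactly as the paper's Case~2 does. Your remark that this already covers $\log k\ge 4C\log n$ is also correct, and your deletion of vertices of non-degree above $2n/k$ is the right first step. The problem is everything below that range, which is the real content of the theorem, and there your outline has no working argument.

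\emph{The greedy bookkeeping certifies only $O(k)$.} The non-edge density of $\bar G[W]$ may double each time $|W|$ shrinks by a factor $\sqrt2$. So level $j$ is guaranteed only about $k/2^{j}$ greedy steps, and $\sum_j k/2^j=O(k)$, which is no better than Tur\'an. Your displayed $\min_j \frac{k}{2^{j+1}}\cdot\frac{\log n}{2(j+1)}$ is of order $\frac{\log n}{\log k}$ (at $j\approx\log k$), not $\frac{k\log n}{\log k}$. The number of halvings you assign to each level is also never justified.

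\emph{The Ramsey fallback is too weak.} Your only source of independent sets is the symmetric bound, which gives about $\frac12\log|W|$. That is too small once $Ck/\log k>1/2$.

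\emph{A test case shows better bookkeeping cannot fix this.} Take the complete $k$-partite graph with equal parts. It satisfies the hypothesis, and every clique has at most $k$ vertices. With $C=0.01$, $k=2^{10}$ and $\log n>1000$, the target $\frac{Ck\log n}{\log k}\approx 1.02\log n$ exceeds both $k$ and $\frac12\log n$. So a proof must certify a large independent set, and neither your greedy clique nor a Ramsey bound without extra input can do that.

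\textbf{The missing idea} is to find a large vertex set with \emph{very small clique number}, and then use the off-diagonal bound $R(s,t)\le\binom{s+t}{s}\le (e(s+t)/s)^s$, which is tiny when $s\ll t. This is the Erd\H{o}s--Szemer\'edi argument, used in the paper's Case~3 ($100<k<\sqrt n$):
\begin{enumerate}
\item After your deletion step, take a \emph{maximum} clique $A$ in the remaining graph $H$, and assume it is smaller than the target.
\item Counting non-edges at $A$ shows that all but $n/5$ vertices of $B=V(H)\setminus A$ miss fewer than $10|A|/k$ vertices of $A$.
\item There are fewer than $n^{1/3}$ such small missed sets $A'$. By pigeonhole, for one fixed $A'$ some $B'\subseteq B$ with $|B'|\ge\sqrt n$ is completely joined to $A\setminus A'$.
\item Maximality of $A$ forces $\omega(H[B'])\le|A'|<10|A|/k=:s$.
\item Then $R(\lceil s\rceil,\lceil t\rceil)\le(10t/s)^s<n^{1/10}$ with $t=\frac{k\log n}{100\log k}$. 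This gives an independent set of size $t\ge\frac{Ck\log n}{\log k}$ inside $B'$.
\end{enumerate}
Without a step that bounds the clique number of a large set by $O(|A|/k)$, your scheme cannot reach the claimed bound.
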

Note that if $k>n/C^2$, then the value of $\frac{Ck\log{n}}{\log{k}}$ would exceed $n$. Thus, the dependence between $n$ and $k$ in~\Cref{main} is essentially optimal.

\section{Proof of~\Cref{main}}
Given positive integers $s$ and $t$, recall that the \emph{Ramsey number} $R(s,t)$ is the smallest integer $r$ such that any $r$-vertex graph contains a clique of size $s$ or an independent set of size $t$.
\begin{proof}
Fix any $0<C\leq0.01$. Let $n\geq3$ be an integer and $2\leq k\leq \frac{n}{3C}$.

\quad\\\textbf{Case 1:} $k\leq100$.

Plugging $s=t=\frac{Ck\log{n}}{\log{k}}$ in the upper bound
\begin{equation}
\label{upper_bound}
R(\lceil{s}\rceil,\lceil{t}\rceil)\leq\binom{\lceil{s}\rceil+\lceil{t}\rceil-2}{\lceil{s}\rceil-1}\leq\binom{s+t}{\lfloor{s}\rfloor}
\end{equation}
by Erd\H{o}s and Szekeres~\cite{erdos1935ramsey}, we obtain that $R(\lceil{s}\rceil,\lceil{t}\rceil)<2^{s+t}\leq4^{\frac{Ck\log{n}}{\log{k}}}<n$. Therefore, $G$ contains a clique or an independent set of size at least $\frac{Ck\log{n}}{\log{k}}$.

\quad\\\textbf{Case 2:} $k\geq\sqrt{n}$.

We have in this case $\frac{Ck\log{n}}{\log{k}}\leq2Ck$. Suppose that $G$ contains no clique of size $\lceil2Ck\rceil$, which also implies that $\lceil2Ck\rceil\geq3$. Then by Tur\'an's theorem~\cite{turan1941extremal}, the number of edges in $G$ is at most
\[\left(1-\frac{1}{\lceil2Ck\rceil-1}\right)\frac{n^2}{2}<\left(1-\frac{1}{2Ck}\right)\frac{n^2}{2}<\left(1-\frac{1}{k}\right)\binom{n}{2}-\left(\frac{1-2C}{2Ck}\cdot\frac{n^2}{2}-\frac{n}{2}\right)<\left(1-\frac{1}{k}\right)\binom{n}{2},\]
a contradiction. Note that in the last inequality we used $n\geq3Ck$ and $0<C\leq0.01$.

\quad\\\textbf{Case 3:} $100<k<\sqrt{n}$.

In this case we follow the Erd\H{o}s--Szemer\'edi argument. Let $W\subseteq V(G)$ be the set of vertices whose degree in $G$ is smaller than $(1-2/k)n$. Let $H$ be the induced subgraph of $G$ obtained by deleting all the vertices in $W$. As $|E(G)|\geq(1-1/k)\binom{n}{2}$, it holds that $|W|\leq n/2$, i.e., $|V(H)|\geq n/2$. Let $A\subseteq V(H)$ be a largest clique in $H$ and let $B=V(H)\setminus A$. Without loss of generality assume that $|A|<\frac{Ck\log{n}}{\log{k}}\leq\frac{k\log{n}}{100\log{k}}$.

\begin{center}
\textit{\textbf{Claim.} There are at most $n/5$ vertices in $B$, each having at most $(1-10/k)|A|$ neighbors in $A$.}
\end{center}

Indeed, otherwise the number of non-edges in $G$ between $A$ and $V(G)\setminus{A}$ is greater than $2n|A|/k$. Meanwhile, since every vertex in $A$ has degree at least $(1-2/k)n$ in $G$, the number of non-edges in $G$ between $A$ and $V(G)\setminus{A}$ is at most $2n|A|/k$, a contradiction. This proves the claim.

The number of subsets of $A$ of cardinality at most $10|A|/k$ is upper bounded by \[\frac{10|A|}{k}\binom{|A|}{10|A|/k}\leq\frac{10|A|}{k}\left(\frac{ek}{10}\right)^{10|A|/k}<\frac{\log{n}}{10\log{k}}\cdot2^{\frac{\log{n}}{10\log{k}}\cdot\log\frac{ek}{10}}<n^{1/3},\]
where in the first inequality we used the estimate $\binom{x}{y}\leq(ex/y)^y$ and in the second inequality we used $|A|<\frac{k\log{n}}{100\log{k}}$. Since $|V(H)|\geq n/2$ and $|A|<\frac{k\log{n}}{100\log{k}}<0.01n$, we have $|B|\geq 0.49n$. Namely, there are at least $|B|-n/5\geq n/4$ vertices in $B$, each having more than $(1-10/k)|A|$ neighbors in $A$. Since $\frac{n/4}{n^{1/3}}\geq\sqrt{n}$, by the pigeonhole principle, there is a subset $A'\subseteq A$ with $|A'|<10|A|/k$ and a subset $B'\subseteq B$ with $|B'|\geq\sqrt{n}$, such that every vertex in $B'$ is adjacent to all the vertices in $A\setminus A'$. Let $H[B']$ be the subgraph of $H$ induced by $B'$.

Observe that $H[B']$ contains no clique of size larger than $|A'|$. Indeed, if $H[B']$ contains a clique $Q$ with $|Q|>|A'|$, then $(A\setminus A')\cup Q$ would be a clique in $H$ which is larger than $A$, contradicting the fact that $A$ is a largest clique in $H$.

Therefore, $H[B']$ is a graph on at least $\sqrt{n}$ vertices without cliques of size $\lceil10|A|/k\rceil$. Applying the upper bound~\eqref{upper_bound} with $s=10|A|/k$ and $t=\frac{k\log{n}}{100\log{k}}>|A|>10s$, we have
\[R(\lceil{s}\rceil,\lceil{t}\rceil)\leq\binom{s+t}{\lfloor{s}\rfloor}\leq\left(\frac{10t}{s}\right)^{s}\leq2^{\frac{10|A|}{k}\cdot\log\frac{k^2\log{n}}{100|A|\log{k}}}<n^{1/10}.\]
In the last inequality we used $|A|<\frac{k\log{n}}{100\log{k}}$. Accordingly, there is an independent set in $H[B']$ (and hence in $G$) of size at least $t\geq\frac{Ck\log{n}}{\log{k}}$.
\end{proof}

\vspace{1em}
\noindent
\textbf{Acknowledgement.} Many thanks to Maria Axenovich for suggesting the writing of this note and for her thoughtful comments.

\end{document}